\setlist[enumerate]{font={\upshape}, label=(\arabic*), leftmargin=*}
\newlist{equivlist}{enumerate}{1}
\setlist[equivlist]{font={\upshape}, label=(\roman*)}
\tikzset{ 
	table/.style={
		matrix of nodes,
		nodes={rectangle,text width=1.75em,align=center},
		text depth=1.25ex,
		text height=2.5ex,
		nodes in empty cells
	}
}
\newcommand*\linenomathpatch[1]{%
	\cspreto{#1}{\linenomath}%
	\cspreto{#1*}{\linenomath}%
	\csappto{end#1}{\endlinenomath}%
	\csappto{end#1*}{\endlinenomath}%
}
\newcommand*\linenomathpatchAMS[1]{%
	\cspreto{#1}{\linenomathAMS}%
	\cspreto{#1*}{\linenomathAMS}%
	\csappto{end#1}{\endlinenomath}%
	\csappto{end#1*}{\endlinenomath}%
}
\let\linenomathAMS\linenomathWithnumbers
\patchcmd\linenomathAMS{\advance\postdisplaypenalty\linenopenalty}{}{}{}
\let\linenomathAMS\linenomathNonumbers
\def\thmhead@plain#1#2#3{%
	\thmname{#1}\thmnumber{\@ifnotempty{#1}{ }\@upn{#2}}%
	\thmnote{ {\the\thm@notefont#3}}}
\let\thmhead\thmhead@plain
\newtheorem{theorem}{Theorem}
\newtheorem{question}{Question}
\newtheorem{proposition}[theorem]{Proposition}
\newtheorem{lemma}[theorem]{Lemma}
\popQED\end{solution}}
\newcommand{\mac}{\mathcal}
\newcommand{\mab}{\mathbb}
\newcommand{\maf}{\mathbf}
\newcommand{\vep}{\varepsilon}
\newcommand{\xx}{\maf{x}}
\newcommand{\yy}{\maf{y}}
\newcommand{\zz}{\maf{z}}
\newcommand{\vv}{\maf{v}}
\renewcommand{\subset}{\subseteq}
\DeclarePairedDelimiter\abs{\lvert}{\rvert}%
\DeclarePairedDelimiter\norm{\lVert}{\rVert}%
\DeclarePairedDelimiter\ipr{\langle}{\rangle}%
\DeclarePairedDelimiter\floor{\lfloor}{\rfloor}%
\def\old@comma{,}
	\old@comma\discretionary{}{}{}%
\begin{document}
	\title{Growing balanced covering sets}
	\author{Tung H. Nguyen}
	\address{Princeton University, Princeton, NJ 08544, USA}
	\email{tunghn@math.princeton.edu}
	\begin{abstract}
		Given a bipartite graph with bipartition $(A,B)$ where $B$ is equipartitioned into $k$ blocks,
		can the vertices in $A$ be picked one by one so that at every step, the picked 
		vertices cover roughly the~same number of vertices in each of these blocks?
		We show that, if 
		each block has cardinality~$m$,
		the vertices in $B$ have the same degree,
		and each vertex in $A$ has at most $cm$ neighbors in every block where $c>0$~is~a small constant,
		then there~is an ordering~$v_1,\ldots,v_n$ 
		of the vertices in $A$ such that for every $j\in\{1,\ldots,n\}$,
		the numbers of vertices with a neighbor in $\{v_1,\ldots,v_j\}$ in every 
		two blocks differ by at most
		$\sqrt{2(k-1)c}\cdot m$.
		This is related to a well-known lemma of Steinitz, 
		and partially answers an unpublished question of Scott~and~Seymour.
	\end{abstract}
	\maketitle
	\section{Introduction}
	For every integer $n\ge1$, let $[n]:=\{1,\ldots,n\}$.
	Let $\mab{N}$ be the set of natural numbers,
	and let $\mab{R}^+$ be the set of nonnegative real numbers.
	The motivation of this note is an unpublished question of Alex Scott and 
	Paul Seymour~\cite{sey2020} on balanced covers of bipartite graphs related to the opening question in the abstract.
	Here, bipartite graphs have no multiple edges.
	\begin{question}
		\label{q:ss}
		Let $k\ge2$ and $m\ge1$ be integers, and let $c\in(0,1/2)$ be a constant independent of $k,m$.
		Consider a bipartite graph
		with bipartition $(A,B)$ where 
		$B$ is partitioned into $k$
		blocks $B_1,\ldots,B_k$
		each of cardinality $m$, the vertices
		in $B$ have the same degree $r\ge1$,
		and each vertex in $A$ has at most $cm$ neighbors in each $B_i$.
		For every $i\in[k]$ and~$S\subset A$,
		let~${N(S,B_i)}$ be the set of vertices in $B_i$ with a neighbor in $S$.
		Does there exist $f\colon\mab{N}\to\mab{R}^+$
		such that there is a chain of sets
		$\emptyset\subsetneq A_1\subsetneq\ldots\subsetneq A_n=A$
		where $n=\abs{A}$
		satisfying
		$\abs{\abs{N(A_j,B_{i_1})}-\abs{N(A_j,B_{i_2})}}
		\le f(k)cm$
		for all $i_1,i_2\in[k]$ and $j\in[n]$?
	\end{question}
	To put Question~\ref{q:ss} into perspective, suppose that we are given 
	disjoint vertex sets $A$ and $B_1,\ldots,B_k$
	with $\abs{B_1}=\ldots=\abs{B_k}=m$
	and each vertex of $A$ only has a small portion of neighbors in each $B_j$.
	In some situations
	(see~\cite[Section 5]{sss2020} for instance), 
	we hope to find a subset $S$ of $A$ such that
	$\abs{N(S,B_1)}$ is roughly $m/2$
	and $\abs{N(S,B_i)}\le m/2$
	for all $i\in\{2,\ldots,k\}$.
	A moment of thought reveals that this can be achieved if 
	the vertices of $A$ can be picked one by one so that at each step $j$ with $A_j$ the set of picked vertices,
	$\abs{N(A_j,B_1)},\ldots,
	\abs{N(A_j,B_k)}$ are roughly the same.
	Indeed, $S$ can be chosen as $A_{j-1}$ where $j\in[n]$ is the smallest index such that there is some $i\in[k]$
	with $\abs{N(A_j,B_i)}>m/2$.
	%
	As a result, for applications it may be desirable to remove the regularity condition on
	$B=B_1\cup\cdots\cup B_k$ in Question~\ref{q:ss}.
	This condition, unfortunately, is in some sense necessary;
	if regularity is changed into almost regularity then $f(2)$ might not even exist,
	as shown by the following proposition.
	\begin{proposition}
		\label{prop:almostreg}
		For every $c,\vep$ with $0<c<1/4$,
		$0<\vep<1$, and $(4c)^{-1}$ an integer,
		there is some $r_0(c,\vep)$ with~the following property.
		For each integer $r\ge r_0(c,\vep)$,
		there exists $m_0(r,c,\vep)$
		such that
		for every integer $m\ge m_0(r,c,\vep)$,
		there is a bipartite graph $G$ with bipartition $(A,B)$ satisfying
		\begin{itemize}
			\item $\abs{A}=c^{-1}r$,
			and $B$ has a partition into two vertex sets $B_1,B_2$
			with $\abs{B_1}=\abs{B_2}=m$,
			
			\item every vertex in $A$ has at most $cm$ neighbors
			in each of $B_1,B_2$,
			
			\item every vertex in $B$ has 
			degree at least $(1-\vep)r$ and at most $r$, and
			
			\item 
			$\abs{\abs{N(S,B_1)}-\abs{N(S,B_2)}}
			\ge \vep m/40$
			for every $S\subset A$
			with $\abs{S}=(4c)^{-1}$.
		\end{itemize}
	\end{proposition}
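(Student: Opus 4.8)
The plan is to take $G$ to be the union of two independent random bipartite graphs, one between $A$ and $B_1$ and one between $A$ and $B_2$, each (essentially) biregular, but with all degrees on the $B_2$-side shrunk by the factor $\approx 1-\varepsilon$ relative to those on the $B_1$-side. This small asymmetry in the degrees -- which ``almost regularity'' permits but exact regularity forbids -- is exactly what keeps $\abs{N(S,B_1)}$ strictly above $\abs{N(S,B_2)}$ for every small $S$.

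Set $t:=(4c)^{-1}$, an integer with $t\ge 2$ since $c<1/4$, and $n:=\abs{A}=4tr$; let $\varepsilon':=\lfloor\varepsilon r\rfloor/r$ and $r_2:=(1-\varepsilon')r=r-\lfloor\varepsilon r\rfloor$, and take $r_0(c,\varepsilon)\ge 2/\varepsilon$ so that $\varepsilon'\in(0,\varepsilon]$, $r_2\ge 1$, and $\lfloor\varepsilon r\rfloor\ge\varepsilon r/2$. Assuming first that $4tr\mid m$ (so that $cm$ and $d_2:=(1-\varepsilon')cm=r_2 m/n$ are integers, with $d_2\le cm$), let $G_1$ be a uniformly random bipartite graph with parts $A,B_1$ in which every vertex of $A$ has degree $cm$ and every vertex of $B_1$ has degree $r$ (this is consistent as $n\cdot cm=mr$, and such biregular graphs exist), and independently let $G_2$ be a uniformly random bipartite graph with parts $A,B_2$ in which every vertex of $A$ has degree $d_2$ and every vertex of $B_2$ has degree $r_2$ (consistent as $n\cdot d_2=mr_2$); put $G:=G_1\cup G_2$. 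For any outcome one then has $\abs{A}=c^{-1}r$, every vertex of $A$ has at most $cm$ neighbours in each $B_i$, and every vertex of $B$ has degree $r$ or $r_2$, both in $[(1-\varepsilon)r,r]$, so the first three items hold. When $4tr\nmid m$ I would instead use graphs biregular up to rounding (all $A$-degrees equal to $\lfloor cm\rfloor$, resp.\ $\lfloor d_2\rfloor$, with all but a bounded number of $B_i$-vertices of the ideal degree and the rest one smaller), which still meets the first three items once $r\ge 1/\varepsilon$ and alters the estimates below only by additive $O(1)$ terms; I suppress these.

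Now fix $S\subseteq A$ with $\abs{S}=t$. In a uniformly random bipartite graph in which every vertex of the part $B_i$ has degree $\rho_i$, the $A$-neighbourhood of a fixed vertex of $B_i$ is (by symmetry of the model under permutations of $A$) uniformly distributed over all $\rho_i$-subsets of $A$, so $\mathbb{E}\,\abs{N(S,B_i)}=m(1-P_i)$, where $P_i:=\binom{n-t}{\rho_i}/\binom{n}{\rho_i}=\prod_{j=0}^{\rho_i-1}\bigl(1-t/(n-j)\bigr)$ and $\rho_1:=r$, $\rho_2:=r_2$; note that these expectations depend on $S$ only through $\abs{S}=t$. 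Since $r_2<r$ and each factor lies in $(0,1)$, we get $P_1<P_2$. Moreover each factor of $P_1/P_2=\prod_{j=r_2}^{r-1}\bigl(1-t/(n-j)\bigr)$ is at most $1-1/(4r)$ and there are $\lfloor\varepsilon r\rfloor\ge\varepsilon r/2$ of them, so $P_1/P_2\le e^{-\varepsilon/8}$; and each factor of $P_2$ is at least $1-1/(3r)$ (as $n-j\ge 4tr-r\ge 3tr$), so $P_2\ge(1-1/(3r))^{r}\ge e^{-1/2}$. Using $1-e^{-y}\ge y/(1+y)$,
\[
\Delta\ :=\ \mathbb{E}\,\abs{N(S,B_1)}-\mathbb{E}\,\abs{N(S,B_2)}\ =\ m(P_2-P_1)\ =\ mP_2\bigl(1-P_1/P_2\bigr)\ \ge\ m\,e^{-1/2}\bigl(1-e^{-\varepsilon/8}\bigr)\ \ge\ \frac{\varepsilon m}{15}.
\]

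Finally I would invoke concentration: for a uniformly random bipartite graph that is regular on one part $B_i$, the quantity $\abs{N(S,B_i)}$ satisfies $\Pr\bigl[\,\bigl|\abs{N(S,B_i)}-\mathbb{E}\abs{N(S,B_i)}\bigr|>\lambda\,\bigr]\le 2e^{-c_0\lambda^2/m}$ for some absolute $c_0>0$; this is standard, e.g.\ via the switching method or by realising the graph through random matchings/balanced colourings. Taking $\lambda:=\varepsilon m/100$ and summing this failure probability over $G_1$, $G_2$, and the at most $n^t$ choices of $S$ with $\abs{S}=t$, for every $m\ge m_0(r,c,\varepsilon)$ there is an outcome in which, for all such $S$ simultaneously, $\abs{N(S,B_1)}\ge\mathbb{E}\abs{N(S,B_1)}-\varepsilon m/100$ and $\abs{N(S,B_2)}\le\mathbb{E}\abs{N(S,B_2)}+\varepsilon m/100$; fixing that $G$ gives
\[
\abs{N(S,B_1)}-\abs{N(S,B_2)}\ \ge\ \Delta-\frac{\varepsilon m}{50}\ \ge\ \frac{\varepsilon m}{15}-\frac{\varepsilon m}{50}\ >\ \frac{\varepsilon m}{40}
\]
for every $S$ with $\abs{S}=t$, which is the fourth item. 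The conceptual heart is merely that $\Delta=m(P_2-P_1)>0$ because $r_2<r$; the genuine work lies in (i) enforcing the two-sided degree bounds exactly while $r$ is fixed and $m\to\infty$ -- this rules out the simplest product-measure models and is why the (uniformly random) biregular model, together with its concentration, is needed -- and (ii) the integrality bookkeeping for $\lfloor\varepsilon r\rfloor$ and $\lfloor cm\rfloor$; I expect both to be routine, with the concentration input the most technical step.
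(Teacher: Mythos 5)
Your argument is correct in outline and reaches the stated bounds, but it takes a genuinely different route from the paper. The paper's sketch uses a product measure: edges between $A$ and $B_i$ appear independently with probability $c(1-(2i-1)\vep/4)$, so the separation of $\abs{N(S,B_1)}$ from $\abs{N(S,B_2)}$ comes from inclusion--exclusion ($\mab{E}\abs{N(S,B_i)}=m(1-(1-p_i)^{|S|})$ up to the degree conditioning), and the two-sided degree bounds are obtained only probabilistically, via Chernoff in $r$ for each $B$-vertex. You instead sample uniformly random \emph{biregular} graphs with $B_1$-degrees exactly $r$ and $B_2$-degrees exactly $r_2=(1-\vep')r$, which makes the first three bullets deterministic and pushes all the randomness into the single estimate $\abs{N(S,B_i)}\approx m(1-P_i)$ with $P_i=\binom{n-t}{\rho_i}/\binom{n}{\rho_i}$; your computation of $P_2-P_1$ and the final numerics check out. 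What your approach buys is a clean handling of the degree window, which in the independent model is genuinely delicate for fixed $r$ and $m\to\infty$ (the event that all $m$ vertices of $B_i$ land in the window has probability exponentially small in $m$, so one must condition on it before concentrating the neighbourhood counts). What it costs is that your concentration input is for a dependent model: the exponential tail for $\abs{N(S,B_i)}$ in the uniform biregular graph is indeed standard (switchings, or edge-exposure martingales in the configuration model), but the exponent you should expect is of order $\lambda^2/(mr)$ rather than $\lambda^2/m$; since $m_0$ may depend on $r$ and the union bound is over a number of sets independent of $m$, this does not affect the conclusion, but you should state the weaker form and give a reference or proof. The suppressed integrality bookkeeping (divisibility of $m$, rounding of $cm$ and $\vep r$) is acceptable at the level of detail the paper itself uses.
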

	\begin{proof}
		[Sketch of proof]
		We make two random bipartite graphs~with bipartitions $(A,B_1)$ and $(A,B_2)$
		where $\abs{A}=c^{-1}r$ and $\abs{B_1}=\abs{B_2}=m$,
		such that for $i=1,2$, every edge between $A$ and $B_i$ is included independently with probability 
		$c(1-(2i-1)\vep/4)$.
		A standard concentration argument shows that there exists $r_0(c,\vep)$
		with the property that for every $r\ge r_0(c,\vep)$, there is some $m_0(r,c,\vep)$ such that
		for each $m\ge m_0(r,c,\vep)$, 
		with positive probability, for every $i=1,2$
		every vertex in $B_i$ has degree at least $(1-i\vep/2)r$ and at most $(1-(i-1)\vep/2)r$,
		and the number of common neighbors of each subset of $A$ of 
		size at~most~$(4c)^{-1}$ in $B_i$ is tightly concentrated around its mean.
		Then an inclusion-exclusion argument finishes the proof.
	\end{proof}
	Still, we believe that Question~\ref{q:ss} is interesting on its own right;
	our result provides a partial answer to it by
	asserting that $f(k)$ can be chosen as $\sqrt{2(k-1)}$ if $c$ is replaced by $\sqrt{c}$.
	\begin{theorem}
		\label{thm:ss}
		Let $k\ge2$ and $m\ge1$ be integers, and let $c\in(0,1/2)$ be a constant independent of~${k,m}$.
		Consider a bipartite graph with bipartition $(A,B)$ where $B$ is partitioned into $k$ blocks
		$B_1,\ldots,B_k$ each of cardinality~$m$,
		the vertices in $B$ have the same degree $r\ge1$,
		and each vertex in $A$ has at most $cm$ neighbors in each $B_i$.
		Then, there is a chain
		$\emptyset\subsetneq A_1\subsetneq\ldots\subsetneq A_n=A$
		where $n=\abs{A}$ such that
		for all $i_1,i_2\in[k]$ and $j\in[n]$,
		$\abs{\abs{N(A_j,B_{i_1})}
			-\abs{N(A_j,B_{i_2})}}
		\le\sqrt{2(k-1)c}\cdot m$.
	\end{theorem}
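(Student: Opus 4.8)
The plan is to build the chain one vertex at a time while controlling the \emph{imbalance potential}
\[
  V(S):=\sum_{i=1}^{k}\Bigl(\abs{N(S,B_i)}-\tfrac1k\sum_{i'=1}^{k}\abs{N(S,B_{i'})}\Bigr)^{2}.
\]
Starting from $A_0:=\emptyset$, once $A_{j-1}$ has been chosen I would take $v_j$ to be a vertex of $R:=A\setminus A_{j-1}$ for which $V(A_{j-1}\cup\{v_j\})$ is smallest, and set $A_j:=A_{j-1}\cup\{v_j\}$. Abbreviating $h_i:=\abs{N(A_{j-1},B_i)}$, $\overline h:=\tfrac1k\sum_i h_i$, and, for $v\in R$, $\delta_i(v):=\abs{N(v)\cap B_i\setminus N(A_{j-1},B_i)}$ and $s(v):=\sum_i\delta_i(v)$, a routine expansion of the squares gives
\[
  V(A_{j-1}\cup\{v\})-V(A_{j-1})=2\sum_{i=1}^{k}\delta_i(v)\bigl(h_i-\overline h\bigr)+\sum_{i=1}^{k}\Bigl(\delta_i(v)-\tfrac{s(v)}{k}\Bigr)^{2}.
\]

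The only place regularity of $B$ is used is the identity $\sum_{v\in R}\delta_i(v)=r\,(m-h_i)$ for each $i$, valid because every vertex of $B_i$ outside $N(A_{j-1},B_i)$ has all $r$ of its neighbours inside $R$. Averaging the increment above over $v\in R$, together with the elementary identity $\sum_{i}(h_i-\overline h)(m-h_i)=-V(A_{j-1})$ and the fact that the minimum over $v\in R$ is at most the average, produces $v_j$ with
\[
  V(A_j)\ \le\ V(A_{j-1})\Bigl(1-\tfrac{2r}{\abs{R}}\Bigr)+\frac1{\abs{R}}\sum_{v\in R}\sum_{i=1}^{k}\Bigl(\delta_i(v)-\tfrac{s(v)}{k}\Bigr)^{2}.
\]
For the error term I would use $\sum_i(\delta_i(v)-s(v)/k)^2\le\sum_i\delta_i(v)^2\le cm\cdot s(v)$ (each $\delta_i(v)\le cm$); combining this with $\sum_{v\in R}s(v)=r\sum_i(m-h_i)=:rW$ and the crude bound $\abs{R}\ge rW/(kcm)$ (which holds since $s(v)\le kcm$) shows the error term is at most $\min\{\,cmrW/\abs{R},\ kc^{2}m^{2}\,\}$, while $W\le km$ throughout.

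Now I would prove $V(A_j)\le(k-1)cm^{2}$ for all $j$ by induction on $j$ (the case $j=0$ being trivial), splitting on the size of $\abs{R}=n-j+1$. If $\abs{R}>2r$ the coefficient $1-2r/\abs{R}$ lies in $(0,1)$, so plugging $V(A_{j-1})\le(k-1)cm^{2}$ and the error bound $cmrW/\abs{R}$ into the recursion gives
\[
  V(A_j)\le(k-1)cm^{2}+\frac{rcm}{\abs{R}}\bigl(W-2(k-1)m\bigr)\le(k-1)cm^{2},
\]
because $W\le km\le 2(k-1)m$ for $k\ge2$. If instead $\abs{R}\le 2r$, then $1-2r/\abs{R}\le0$; since $V(A_{j-1})\ge0$ the first summand may simply be discarded, so $V(A_j)\le kc^{2}m^{2}\le(k-1)cm^{2}$, using $c<\tfrac12\le 1-\tfrac1k$. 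Finally, any reals $x_1,\dots,x_k$ with mean $\overline x$ satisfy $\max_i x_i-\min_i x_i\le\sqrt{2\sum_i(x_i-\overline x)^{2}}$, so $V(A_j)\le(k-1)cm^{2}$ gives at once $\abs{\abs{N(A_j,B_{i_1})}-\abs{N(A_j,B_{i_2})}}\le\sqrt{2(k-1)c}\cdot m$ for all $i_1,i_2\in[k]$ and $j\in[n]$.

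The part I expect to require the most care is getting the recursion to close. The tempting choice of tracking $V$ against the remaining demand $W$ fails in the final steps; the fix is to aim for the constant bound $V(A_j)\le(k-1)cm^{2}$ and to notice that when $\abs{R}$ drops below $2r$ the sign of $1-2r/\abs{R}$ flips, so that term works \emph{for} us rather than against us. One should also check that the constant $2(k-1)$ produced this way is precisely what makes $W\le 2(k-1)m$ hold for every $k\ge2$, which is the source of the factor $\sqrt{2(k-1)}$ in the statement.
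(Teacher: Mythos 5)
Your proposal is correct and follows essentially the same route as the paper: a greedy choice of the next vertex minimizing a quadratic imbalance potential, controlled by averaging the increment over all remaining vertices, the regularity identity $\sum_{v\in R}\delta_i(v)=r(m-h_i)$ giving the contraction term $-\tfrac{2r}{\abs{R}}V$, and a $kc^2m^2$-type bound on the second-order error. The only cosmetic differences are that the paper measures imbalance against block $k$ rather than against the mean, and it handles the final $2r$ steps by bounding each coverage directly by $2cm$ instead of exploiting the sign change of $1-2r/\abs{R}$.
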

	In Section~\ref{sec:equiv}, we present an equivalent formulation of Question~\ref{q:ss} which is Question~\ref{q:main},
	and an equivalent statement of Theorem~\ref{thm:ss} which is Theorem~\ref{thm:main}.
	In Section~\ref{sec:main},
	we prove Theorem~\ref{thm:main}.
	Section~\ref{sec:additional} is a brief discussion about a variant of Question~\ref{q:main}.
	\section{An equivalent formulation of Question~\ref{q:ss}}
	\label{sec:equiv}
	In this section, we introduce an equivalent formulation of 
	Question~\ref{q:ss} which is more convenient to work with.
	For a finite set $S$ and an integer $r\ge1$, let $S^{(r)}$ be the family of all subsets of 
	cardinality~$r$~of~$S$; we~identify~$S^{(1)}$ with $S$.
	For integers $n,r$ with $n\ge r\ge1$,
	a \emph{weighted $r$-uniform hypergraph on $[n]$}
	is a function $w\colon [n]^{(r)}\to\mab{R}^+$
	satisfying ${\sum_{R\in[n]^{(r)}}w(R)=1}$.
	For~every $S\subset[n]$, let $w^*(S):=\sum_{R\in S^{(r)}}w(R)$.
	Thus $w^*(S)=0$ when $\abs{S}<r$.
	Now, Question~\ref{q:ss} can be rephrased as~follows.
	\begin{question}
		\label{q:main}
		Let $k\ge2$ be an integer and ${c\in(0,1/2)}$.
		Let $w_1,\ldots,w_k$ be weighted $r$-uniform hypergraphs on $[n]$
		satisfying~${w_i^*([n]\setminus\{j\})\ge1-c}$
		for all $i\in[k]$ and $j\in[n]$,
		where $n,r$ are integers with ${n\ge r\ge1}$.
		Does there exist $f\colon\mab{N}\to\mab{R}^+$ such that there is a chain
		$\emptyset\subsetneq S_1\subsetneq
		\ldots\subsetneq S_n=[n]$
		satisfying
		$\abs{w_{i_1}^*(S_j)-w_{i_2}^*(S_j)}\le f(k)c$
		for all $i_1,i_2\in[k]$ and $j\in[n]$?
	\end{question}
	\begin{proof}
		[Proof of the equivalence of Questions~\ref{q:ss} and~\ref{q:main}]
		First, assume that Question~\ref{q:main} has a positive answer with some $f\colon\mab{N}\to\mab{R}^+$.
		To see that $f$ answers Question~\ref{q:ss} in the positive,
		we identify $A$ with $[n]$, and let 
		$w_i(R):=1-\abs{N(A\setminus R,B_i)}/m$ for all $i\in[k]$ and $R\in A^{(r)}$;
		then
		$w_i^*(S)=1-\abs{N(A\setminus S,B_i)}/m$
		for all $i\in[k]$ and $S\subset A=[n]$,
		in particular
		$w_i^*([n]\setminus\{j\})=1-\abs{N(j,B_i)}/m\ge 1-c$
		for all~$j\in[n]$.
		Let $S_n:=[n]$ and $S_j:=[n]\setminus A_{n-j}$ for every $j\in[n-1]$,
		then $\emptyset\subsetneq S_1\subsetneq\ldots\subsetneq S_n=[n]$,
		hence 
		\[\abs{\abs{N(A_j,B_{i_1})}-\abs{N(A_j,B_{i_2})}}
		=m\abs{w_{i_1}^*(S_{n-j})-w_{i_2}^*(S_{n-j})}
		\le f(k)cm
		\quad
		\text{for all $i_1,i_2\in[k]$ and $j\in[n-1]$}.\]
		Moreover, $\abs{N(A_n,B_i)}=\abs{N(A,B_i)}
		=\abs{B_i}=m$ for all $i\in[k]$
		as $r\ge1$.
		Therefore $f$ answers Question~\ref{q:ss} in the positive.	
		
		Now, assume that Question~\ref{q:ss}
		has a positive answer with some $f\colon\mab{N}\to\mab{R}^+$.
		To see that $f$ answers Question~\ref{q:main} in the positive,
		observe that it suffices to consider when each $w_i$ assumes rational values,
		in which case there exists $m$ such that $m\cdot w_i(R)$ is an integer for all $i\in[k]$ and~$R\in [n]^{(r)}$.
		We then let $A:=[n]$, and let each $B_i$ have cardinality $m$ and have precisely 
		$m\cdot w_i(R)$ vertices each having neighborhood $R$ for every $R\in[n]^{(r)}$.
		Since $\sum_{R\in[n]^{(r)}}w_i(R)=1$,
		every vertex in $B_i$ has degree~$r$
		and
		$\abs{N(A\setminus S,B_i)}/m
		=1-w_i^*(S)$
		for all $i\in[k]$ and $S\subset [n]=A$,
		thus $\abs{N(j,B_i)}/m
		=1-w_i^*([n]\setminus\{j\})\le c$
		for all $j\in[n]$.
		Let $A_n:=A$ and $A_j:=A\setminus S_{n-j}$ for every $j\in[n-1]$, then
		$\emptyset\subsetneq A_1\subsetneq\ldots\subsetneq A_n=A$,
		so
		\[\abs{w_{i_1}^*(S_j)-w_{i_2}^*(S_j)}
		=\abs{\abs{N(A_{n-j},B_{i_1})}-
			\abs{N(A_{n-j},N_{i_2})}}/m
		\le f(k)c
		\quad
		\text{for all $i_1,i_2\in[k]$ and $j\in[n-1]$}.\]
		Moreover, $w_i^*(S_n)=w_i^*([n])=1$ for all $i\in[k]$.
		Therefore $f$ answers  Question~\ref{q:main}
		in the positive.
	\end{proof}
	The above proof also shows that Theorem~\ref{thm:ss} is equivalent to a result that partially answers Question~\ref{q:main};
	we shall prove this result in Section~\ref{sec:main}.
	\begin{theorem}
		\label{thm:main}
		Let $k\ge2$ be an integer and $c\in(0,1/2)$.
		Let $w_1,\ldots,w_k$ be weighted $r$-uniform hypergraphs on~$[n]$ for some $n\ge r\ge1$.
		If $w_i^*([n]\setminus\{j\})\ge1-c$ for every $i\in[k]$ and $j\in[n]$,
		then there is a chain~of~sets $\emptyset\subsetneq S_1\subsetneq\ldots\subsetneq S_n=[n]$
		such that
		$\abs{w_{i_1}^*(S_j)-w_{i_2}^*(S_j)}\le\sqrt{2(k-1)c}$
		for all $i_1,i_2\in[k]$ and $j\in[n]$.
	\end{theorem}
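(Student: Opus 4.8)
The plan is to grow the chain greedily from $\emptyset$, adding one vertex at a time, and to track the point $(w_1^*(S),\dots,w_k^*(S))\in\mab{R}^k$ through the quadratic potential
\[
\Psi(S):=\sum_{i=1}^{k}\Bigl(w_i^*(S)-\frac1k\sum_{\ell=1}^{k}w_\ell^*(S)\Bigr)^2 ,
\]
keeping the invariant $\Psi(S_j)\le(k-1)c$ along the chain. This yields the theorem immediately: for any $i_1,i_2$ the coordinates $i_1,i_2$ alone contribute at least $\frac12\bigl(w_{i_1}^*(S)-w_{i_2}^*(S)\bigr)^2$ to $\Psi(S)$, so $\abs{w_{i_1}^*(S)-w_{i_2}^*(S)}\le\sqrt{2\Psi(S)}\le\sqrt{2(k-1)c}$. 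The only quantitative consequence of the hypothesis I would need is that a single step moves each $w_i^*$ by at most $c$: for $j\notin S$, summing over the $(r-1)$-subsets $Q$ of $S$,
\[
w_i^*(S\cup\{j\})-w_i^*(S)=\sum_{Q}w_i(Q\cup\{j\})\le\sum_{R\ni j}w_i(R)=1-w_i^*\bigl([n]\setminus\{j\}\bigr)\le c ,
\]
because the sets $Q\cup\{j\}$ are distinct $r$-sets containing $j$.

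For the inductive step, assume $\Psi(S)\le(k-1)c$ with $\emptyset\subseteq S\subsetneq[n]$ and set $s=\abs{S}$, $a=(w_1^*(S),\dots,w_k^*(S))$, and $u=a-\bar a\,(1,\dots,1)$ with $\bar a:=\frac1k\sum_{\ell}w_\ell^*(S)$ (so $u$ is $a$ projected off the diagonal and $\|u\|^2=\Psi(S)$); for $j\notin S$ let $\delta(j)\in\mab{R}^k$ be the increment vector, with entries $\delta_i(j)=w_i^*(S\cup\{j\})-w_i^*(S)\in[0,c]$. Since projection off the diagonal only shortens a vector, $\Psi(S\cup\{j\})\le\Psi(S)+2\langle u,\delta(j)\rangle+\|\delta(j)\|^2$, and I would pick a vertex $j$ attaining at most the average of the right side over the $n-s$ candidates; the estimate $\|\delta(j)\|^2\le c\sum_i\delta_i(j)$ (valid as $\delta_i(j)\in[0,c]$) makes the averaged noise equal to $\frac{c}{n-s}\sum_i\sum_{j\notin S}\delta_i(j)$. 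When $r=1$ this closes at once: then $\sum_{j\notin S}\delta(j)=(1,\dots,1)-a$ exactly, so $\langle u,\sum_{j\notin S}\delta(j)\rangle=-\|u\|^2=-\Psi(S)$ and $\sum_i\sum_{j\notin S}\delta_i(j)=\sum_i(1-w_i^*(S))\le k$, whence $\min_{j\notin S}\Psi(S\cup\{j\})\le\Psi(S)\bigl(1-\tfrac{2}{n-s}\bigr)+\tfrac{ck}{n-s}$, which is at most $(k-1)c$ once $n-s\ge2$ (using $ck\le2(k-1)c$), while the remaining step $n-s=1$ lands on $[n]$ with $\Psi([n])=0$. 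The greedy thus keeps $\Psi$ below $(k-1)c$ throughout, in the spirit of the self-correcting proofs of the Steinitz lemma.

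The real difficulty is $r\ge2$, where $\sum_{j\notin S}\delta(j)$ is no longer $(1,\dots,1)-a$: a vertex $j$ completes only those $r$-sets with a single vertex outside $S$, so $\sum_{j\notin S}\delta_i(j)=1-w_i^*(S)-\lambda_i(S)$ with $\lambda_i(S):=\sum_{\abs{R\setminus S}\ge2}w_i(R)$, and hence $\langle u,\sum_{j\notin S}\delta(j)\rangle=-\Psi(S)-\langle u,\lambda(S)\rangle$ picks up a term of indeterminate sign, where $\lambda(S)=(\lambda_1(S),\dots,\lambda_k(S))$. To absorb it I would monitor the whole profile of partial coverages: for $0\le t\le r$ put $\Sigma_i^{(t)}(S):=\sum_{\abs{R\setminus S}\ge t}w_i(R)$ (so $\Sigma_i^{(0)}\equiv1$, $\Sigma_i^{(1)}=1-w_i^*(S)$, $\Sigma_i^{(r+1)}\equiv0$), let $D_t(S)$ be the squared spread of $\bigl(\Sigma_i^{(t)}(S)\bigr)_{i}$ about its mean, and run the greedy against $\Phi(S)=\sum_{t=1}^{r}\gamma_t D_t(S)$ with $\gamma_1=1$. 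Adding a vertex transfers $w_i$-mass from level $t$ to level $t-1$, so the levels form a cascade; an Abel-summation identity then converts $\sum_{j\notin S}\bigl(\Phi(S\cup\{j\})-\Phi(S)\bigr)$ into $-\Phi(S)$ plus a noise term that can again be controlled through $\abs{R\setminus S}\le r$ and the per-vertex bound $c$.

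The part I expect to require genuine work is choosing the weights $\gamma_t$ and estimating the interaction terms between consecutive levels sharply enough that the resulting constant is precisely $\sqrt{2(k-1)c}$ and, in particular, does not deteriorate as $r$ grows (a naive equal-weight choice loses a factor of order $\sqrt{r}$ in the noise). If a purely potential-theoretic argument proves too lossy here, the fallback is to retain the simple potential $\Psi$ and replace the uniform average over $j\notin S$ by one biased towards vertices that release lower-order mass in the coordinates that are currently lagging, so as to turn $\langle u,\lambda(S)\rangle$ to our advantage rather than against us.
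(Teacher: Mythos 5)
Your forward greedy with the quadratic potential is sound for $r=1$, and your diagnosis of why it breaks for $r\ge2$ is exactly right: when you add $j$ to $S$, only the edges $R$ with $R\setminus S=\{j\}$ are completed, so $\sum_{j\notin S}\delta_i(j)=1-w_i^*(S)-\lambda_i(S)$ and the self-correcting drift term acquires the uncontrolled correction $\langle u,\lambda(S)\rangle$. But the repair you propose --- the cascading potential $\Phi(S)=\sum_t\gamma_t D_t(S)$ with weights $\gamma_t$ to be determined, or alternatively a biased sampling of the next vertex --- is not carried out, and you yourself flag that closing it without losing a factor depending on $r$ is the hard part. As it stands, the argument proves the theorem only for $r=1$; for $r\ge2$ there is a genuine gap, not a routine verification.

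The missing idea is to build the chain \emph{in reverse}, deleting elements from $[n]$ rather than adding them to $\emptyset$. Set $\delta_i(j,S):=w_i^*(S)-w_i^*(S\setminus\{j\})=\sum_{R\in S^{(r)},\,j\in R}w_i(R)$. Then two things happen simultaneously that fail in the forward direction. First, the exact self-correction identity holds for \emph{every} $r$: each $R\in S^{(r)}$ contains exactly $r$ elements of $S$, so $\sum_{j\in S}\delta_i(j,S)=r\,w_i^*(S)$, and hence the decrements $\xx_j=\boldsymbol{\varphi}(S)-\boldsymbol{\varphi}(S\setminus\{j\})$ sum to $r\boldsymbol{\varphi}(S)$ --- there is no analogue of your $\lambda_i(S)$ term. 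Second, the per-step noise is still controlled by the hypothesis, since $\delta_i(j,S)\le\delta_i(j,[n])=1-w_i^*([n]\setminus\{j\})\le c$, giving $\sum_{j\in S}\norm{\xx_j}^2\le 2r(k-1)c$. Averaging over a uniform $j\in S$ then yields $\frac{1}{\abs{S}}\sum_{j\in S}\norm{\boldsymbol{\varphi}(S\setminus\{j\})}^2\le\norm{\boldsymbol{\varphi}(S)}^2-\frac{2r}{\abs{S}}\bigl(\norm{\boldsymbol{\varphi}(S)}^2-(k-1)c\bigr)$, so the invariant $\norm{\boldsymbol{\varphi}(S)}^2\le(k-1)c$ is maintained down to $\abs{S}=2r$; the remaining sets satisfy $w_i^*(S)\le 2c$ outright by the same identity and can be ordered arbitrarily. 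This gives the stated constant $\sqrt{2(k-1)c}$ uniformly in $r$, which is precisely what your cascading scheme was struggling to recover.
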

	
	We would like to make three remarks.
	First, in Questions~\ref{q:ss} and~\ref{q:main}, $f(2)$ can be chosen to be $1$, 
	but it is still open whether $f(3)$ exists.
	It would also be helpful to know whether $f$ exists when $r=2$.
	
	Second, if one restricts Question~\ref{q:main} to the case $r=1$ only,
	then one can choose $f(k)=2(k-1)$; 
	this follows from a well-known lemma of Steinitz
	(see~\cite{bar2008}
	for its history and related~results),
	which~we~state~here.
	In what follows, $\norm{.}_{\infty}$ denotes the $\infty$-norm.
	\begin{theorem}
		\label{thm:steinitz}
		Let $k\ge1$ be an integer.
		Then for every finite subset~$V$~of~$\mathbb{R}^k$~with
		$\norm{\vv}_{\infty}\le1$ for all~$\vv\in V$
		and $\sum_{\vv\in V}\vv=\maf{0}$,
		there is an ordering
		$\vv_1,\ldots,\vv_n$ of the vectors in $V$ with
		$\norm{\vv_1+\cdots+\vv_j}_{\infty}\le k$
		for~all~$j\in[n]$.
	\end{theorem}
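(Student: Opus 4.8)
The plan is to reprove this classical lemma of Steinitz by the polytope argument of Grinberg and Sevastyanov.

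I would construct the ordering $\vv_1,\vv_2,\dots$ greedily, one vector at a time. Write $\maf{s}_j:=\vv_1+\dots+\vv_j$, so that $\maf{s}_0=\maf{0}=\maf{s}_n$, and $R_j:=V\setminus\{\vv_1,\dots,\vv_j\}$, so that $\sum_{\vv\in R_j}\vv=-\maf{s}_j$. The point to exploit is that the \emph{average} remaining vector is $-\maf{s}_j/|R_j|$: fractionally, continuing to add remaining vectors drags the partial sum back toward the origin, and this is what makes a box bound independent of $n$ possible. Accordingly I would maintain, after $\vv_1,\dots,\vv_j$ have been fixed, the invariant that $\maf{s}_j\in[-k,k]^k$ together with a \emph{fractional certificate} that $R_j$ can still be arranged so as to keep every future partial sum inside $[-k,k]^k$ --- concretely, a feasible point of an auxiliary polytope $P_j\subseteq[0,1]^{R_j}$ describing such a completion, a suitable "balanced" completion always witnessing $P_j\neq\emptyset$.

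The key structural fact is that $P_j$ is cut out of the cube $[0,1]^{R_j}$ by only $k$ linear equalities (those forcing a completion of the ordering to return to $\maf{0}$), so a vertex of $P_j$ has at most $k$ coordinates strictly between $0$ and $1$; as long as $|R_j|>k$, some coordinate $\theta_{\vv}$ of such a vertex equals $0$ or $1$, and I would then \emph{commit}: take that $\vv$ to be $\vv_{j+1}$, check that $\maf{s}_{j+1}=\maf{s}_j+\vv_{j+1}\in[-k,k]^k$, and re-establish the invariant by passing to a vertex of the updated polytope $P_{j+1}$. Once only $k$ vectors remain, any order of them finishes the job, since a sum of at most $k$ vectors of $\infty$-norm at most $1$ has $\infty$-norm at most $k$.

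I expect the delicate point to be the precise definition of $P_j$ and the simultaneous verification of its three required features: nonemptiness (via the balanced completion), the guarantee that committing to a $0/1$ coordinate of a vertex of $P_j$ keeps $\maf{s}_{j+1}$ in the cube, and the fact that after committing one can restore the invariant at step $j+1$. It is exactly the matching of the $k$ equality constraints against the $k$-dimensional ambient space $\mathbb{R}^k$ that produces the sharp constant $k$ rather than something larger. If one is content with a weaker bound of the form $O(k)$, a coarser bookkeeping --- controlling $-\maf{s}_j$ as a bounded $[0,1]$-weighted combination of a bounded number of remaining vectors and shrinking the number of terms, via a linear dependence, whenever it gets too large --- goes through more easily, and I would write that down first as a sanity check on the mechanism before optimizing the constant.
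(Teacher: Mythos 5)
The paper does not actually prove Theorem~\ref{thm:steinitz}: it quotes Steinitz's lemma as known and points to~\cite{bar2008}, so there is no in-paper argument to compare against, and your proposal must stand on its own. You have correctly identified the Grinberg--Sevastyanov extreme-point strategy and the trivial endgame (the last $k$ partial sums are sums of at most $k$ unit-box vectors), but the single step that constitutes the proof is the one you defer, and the mechanism you sketch for it is not correct as stated. First, your polytope carries only the $k$ equalities forcing the completion to return to $\maf{0}$; without the additional normalization $\sum_{\vv\in R_j}\theta_{\vv}=\abs{R_j}-k$ the certificate gives no bound on $\maf{s}_j$ at all. That extra equality is exactly what yields the bound, via $-\maf{s}_j=\sum_{\vv\in R_j}\vv=\sum_{\vv\in R_j}(1-\theta_{\vv})\vv$ and hence $\norm{\maf{s}_j}_{\infty}\le\sum_{\vv\in R_j}(1-\theta_{\vv})=k$; "checking that $\maf{s}_{j+1}\in[-k,k]^k$" is not something you can do from $\maf{s}_j\in[-k,k]^k$ and $\norm{\vv_{j+1}}_{\infty}\le1$ alone.

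Second, you cannot commit to a coordinate equal to $0$ or $1$ symmetrically. If $\theta_{\uu}=1$ and you remove $\uu$, the restricted weights satisfy $\sum_{\vv\in R_{j+1}}\theta_{\vv}\vv=-\uu\ne\maf{0}$, so the certificate is destroyed; only a coordinate equal to $0$ may be committed. And a vertex need not have a zero coordinate under the natural normalization: in $\mathbb{R}^1$ with $v_1=-0.5$, $v_2=0.3$, $v_3=0.8$, the point $(1,0.6,0.4)$ is a vertex of $\{\theta\in[0,1]^3:\sum_i\theta_iv_i=0,\ \sum_i\theta_i=2\}$ with no coordinate equal to $0$. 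The actual Grinberg--Sevastyanov step takes the polytope with the \emph{decremented} normalization $\sum_{\vv}\theta_{\vv}=\abs{R_j}-k-1$ (nonempty by rescaling the previous certificate), and then counts at a vertex: at most $k+1$ coordinates are fractional, and at most $\abs{R_j}-k-1$ can equal $1$ because the coordinates are nonnegative and sum to $\abs{R_j}-k-1$; these two facts together force some coordinate to equal $0$, and that vector is the one you may pick next. Since you explicitly flag this verification as the unproved "delicate point" and the version you describe fails on it, the proposal is a correct plan but not yet a proof.
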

	To see how $f(k)$ can be chosen as $2(k-1)$ when $r=1$ in Question~\ref{q:main},
	for each $j\in[n]$
	let $\vv_j$ be the vector in $\mab{R}^{k-1}$ whose $i$-th component is
	$w_{i}(j)-w_{k}(j)$
	for all $i\in[k-1]$,
	then $\norm{\vv_j}_{\infty}\le c$. 
	We apply Theorem~\ref{thm:steinitz}
	to
	$V=\{\vv_1,\ldots,\vv_n\}$, noting that
	$w_i^*(S)=\sum_{j\in S}w_i(j)$
	and
	$\abs{w_{i_1}^*(S)-w_{i_2}^*(S)}
	\le\abs{w_{i_1}^*(S)-w_k^*(S)}
	+\abs{w_{i_2}^*(S)-w_k^*(S)}$
	for all $i,i_1,i_2\in[k]$ and $S\subset[n]$.
	Theorem~\ref{thm:main} does not imply Theorem~\ref{thm:steinitz} as far as we know.
	
	Third, every function $f$ answering Question~\ref{q:main}
	in the positive, if exists, satisfies $f(k)=\Omega(\sqrt{k})$.
	To see this, for integer $k\ge1$
	let $g(k)$ be the smallest constant such that for each finite $V\subset\mab{R}^k$ with~$\norm{\vv}_{\infty}\le1$ for all $\vv\in V$ and $\sum_{\vv\in V}\vv=\maf{0}$,
	there is an ordering $\vv_1,\ldots,\vv_n$ of the vectors in $V$ with $\norm{\vv_1+\cdots+\vv_j}_{\infty}\le g(k)$ for all $j\in[n]$;
	then $g(k)\le k$ by Theorem~\ref{thm:steinitz}.
	\begin{proposition}
		$f(k)\ge g(k-1)$ for all $k\ge2$.
	\end{proposition}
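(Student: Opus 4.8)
The plan is to derive the equivalent inequality $g(k-1)\le f(k)$ from the hypothesis that some $f$ answers Question~\ref{q:main}. Fix a finite $V=\{\vv_1,\dots,\vv_n\}\subset\mab{R}^{k-1}$ with $\norm{\vv_\ell}_\infty\le1$ and $\sum_\ell\vv_\ell=\maf0$, and write $g(V)$ for the least $\rho$ such that $V$ admits an ordering with every prefix sum of $\infty$-norm at most $\rho$, so that $g(k-1)=\sup_V g(V)$. First I would reduce to the case that $V$ is \emph{sign-definite}, meaning every $\vv_\ell$ has all coordinates $\ge0$ or all coordinates $\le0$: replacing each $\vv_\ell$ by the pair $\vv_\ell^{+}$ and $-\vv_\ell^{-}$ (positive and negative parts) produces a sign-definite family with the same sum, and one checks that this splitting does not decrease $g(\cdot)$. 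This reduction is where I expect the real difficulty to lie.

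Granting it, assume $V$ is sign-definite and $n\ge2$ (if $n\le1$ then $V=\{\maf0\}$ and $g(V)=0$). Put $c:=1/(2n)\in(0,1/2)$ and work on $[n']$ with $n':=n+\lceil1/c\rceil$. I would build $k$ weighted $1$-uniform hypergraphs $w_1,\dots,w_k$ on $[n']$ as follows: for $\ell\in[n]$ let $\mu_\ell:=\max\{0,\,-c\min_{i\in[k-1]}(\vv_\ell)_i\}$ and set $w_i(\ell):=c(\vv_\ell)_i+\mu_\ell$ for $i\in[k-1]$ and $w_k(\ell):=\mu_\ell$; for $\ell\in[n']\setminus[n]$ set $w_i(\ell):=\nu$ for all $i$, where $\nu:=\bigl(1-\sum_{\ell\in[n]}\mu_\ell\bigr)\big/\lceil1/c\rceil$. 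Sign-definiteness forces $0\le w_i(\ell)\le c$ for all $i,\ell$ (if $\vv_\ell\ge\maf0$ then $\mu_\ell=0$ and $w_i(\ell)=c(\vv_\ell)_i$; if $\vv_\ell\le\maf0$ then $\mu_\ell=c\norm{\vv_\ell}_\infty\le c$ and $w_i(\ell)=c\bigl((\vv_\ell)_i+\norm{\vv_\ell}_\infty\bigr)$), and $\sum_{\ell\in[n]}\mu_\ell\le nc\le1$ keeps $\nu\in[0,c]$; since $\sum_\ell\vv_\ell=\maf0$ one also gets $\sum_{\ell\in[n']}w_i(\ell)=1$ for every $i$. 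Hence each $w_i$ is a weighted $1$-uniform hypergraph on $[n']$ with $w_i^*([n']\setminus\{j\})=1-w_i(j)\ge1-c$ for all $i,j$.

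Next I would apply the positive answer $f$ of Question~\ref{q:main} (with $r=1$) to this instance, obtaining a chain $\emptyset\subsetneq S_1\subsetneq\dots\subsetneq S_{n'}=[n']$ with $\abs{w_{i_1}^*(S_j)-w_{i_2}^*(S_j)}\le f(k)c$ for all $i_1,i_2,j$. By construction $w_i(\ell)-w_k(\ell)=c(\vv_\ell)_i$ for $\ell\in[n]$ and $=0$ for $\ell\in[n']\setminus[n]$, so for every $S\subset[n']$ and $i\in[k-1]$ we have $w_i^*(S)-w_k^*(S)=c\bigl(\sum_{\ell\in S\cap[n]}\vv_\ell\bigr)_i$, whence $\bigl\|\sum_{\ell\in S_j\cap[n]}\vv_\ell\bigr\|_\infty=c^{-1}\max_{i\in[k-1]}\abs{w_i^*(S_j)-w_k^*(S_j)}\le f(k)$ for every $j$. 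As $j$ increases the sets $S_j\cap[n]$ run through a chain $\emptyset\subsetneq T_1\subsetneq\dots\subsetneq T_n=[n]$ with $\abs{T_t}=t$; ordering $V$ so that $\vv_\ell$ precedes $\vv_{\ell'}$ whenever $\ell$ enters the chain before $\ell'$ makes $\sum_{\ell\in T_t}\vv_\ell$ the $t$-th prefix sum, which has $\infty$-norm $\le f(k)$. Thus $g(V)\le f(k)$, and taking the supremum over sign-definite $V$ and invoking the reduction gives $g(k-1)\le f(k)$.

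The step I expect to be the main obstacle is the reduction to sign-definite configurations — equivalently, that splitting $\vv$ into $\vv^{+}$ and $-\vv^{-}$ is harmless for $g(\cdot)$. This reduction is genuinely needed: a vertex meeting at most $cm$ vertices of each block has profile in $[0,c]^k$, hence of spread at most $c$, which can encode a Steinitz vector $\vv_\ell$ only when $(\vv_\ell,0)\in\mab{R}^k$ has spread at most $1$; this is automatic when $\vv_\ell$ is sign-definite, but a two-signed vector can have spread up to $2$, and the naive encoding of such a vector yields only the weaker bound $f(k)\ge g(k-1)/2$. The remaining ingredients — choosing $c$ small and padding with "flat" vertices, which contribute $0$ to every discrepancy $w_{i_1}^*-w_{i_2}^*$, so that the $w_i$ are legitimate weighted hypergraphs — are routine bookkeeping.
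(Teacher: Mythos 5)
Your argument for sign-definite families is correct, and your final paragraph correctly isolates the real obstruction: a vertex's profile $(w_1(j),\ldots,w_k(j))$ lies in $[0,c]^k$ and so has spread at most $c$, whereas $(\vv_\ell,0)$ can have spread $2$, so any per-vertex encoding of a two-signed Steinitz vector loses a factor of $2$. But the step you lean on to recover the full constant --- that replacing each $\vv_\ell$ by the pair $\vv_\ell^{+},-\vv_\ell^{-}$ does not decrease $g(\cdot)$ --- is a genuine gap, not a deferred technicality. You give no argument for $g(\mathrm{split}(V))\ge g(V)$, and I do not see one: in an ordering of the split family with small prefix sums, the two halves of a vector may sit far apart, and there is no evident way to merge them back into an ordering of $V$ without blowing up the prefix sums (the "extra" terms in a prefix are an uncontrolled mixture of positive and negative parts). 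Worse, the near-extremal families for $g$ cited in the paper are built from rows of Hadamard matrices, i.e.\ they are as far from sign-definite as possible, so the reduction cannot be waved away as affecting only non-extremal configurations. As it stands your proposal proves only the bound you yourself attribute to the naive encoding, $f(k)\ge g(k-1)/2$, not the stated $f(k)\ge g(k-1)$.

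The paper's route is different and never mentions sign-definiteness: it rescales a witness family to $\norm{\vv_j}_\infty\le\theta c$, pads with zero vectors so that $1/n\le(1-\theta)c$, and sets $w_k(j):=1/n$, $w_i(j):=1/n+v_{ji}$, so that $w_i-w_k$ encodes $\vv_j$ with no splitting and the row sums are automatically $1$; letting $\theta\to1$ gives $f(k)\ge g(k-1)$. You should note, however, that your spread objection touches this construction too: nonnegativity of $w_i(j)$ needs $1/n\ge\theta c$ while the degree condition needs $1/n\le(1-\theta)c$, and both hold only for $\theta\le1/2$ --- so the paper's argument, read literally, also leaves a constant factor on the table for vectors attaining both signs near $\pm\theta c$. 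In short: you have correctly located the crux of the proposition, but the sign-definite reduction you propose to resolve it is unproven, is not routine, and may be as hard as (or harder than) proving the inequality directly.
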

	\begin{proof}
		%
		Fix $\theta\in(0,1)$, then by the definition of $g(k-1)$,
		there exist an integer $n\ge1$ and $\vv_1,\ldots,\vv_n\in\mab{R}^{k-1}$ with $\norm{\vv_j}_{\infty}\le\theta c$ for all $j\in[n]$ and $\sum_{j=1}^n\vv_j=\maf{0}$, 
		such that for every chain $\emptyset\subsetneq S_1\subsetneq\ldots\subsetneq S_n=[n]$,
		there is some $j_0\in[n]$ with $\norm{\sum_{j\in S_{j_0}}\vv_j}_{\infty}>(\theta g(k-1))(\theta c)=\theta^2g(k-1)c$.
		By adding zero vectors if necessary, we may assume $n\ge((1-\theta)c)^{-1}$.
		Define $w_1,\ldots,w_k\colon[n]
		\to\mab{R}^+$
		by the rule that for every $j\in[n]$,
		$w_k(j):=1/n$ and
		$w_i(j):=w_k(j)+v_{ji}$
		where $v_{ji}$ is the $i$-th coordinate of $\vv_j$
		for all $i\in[k-1]$.
		Then $w_1,\ldots,w_k$ are weighted $1$-uniform hypergraphs on $[n]$;
		and as $n\ge((1-\theta)c)^{-1}$,
		$w_i(j)=w_k(j)+v_{ji}\le(1-\theta)c+\theta c=c$ for every $i\in[k-1]$ and $j\in[n]$.
		But~${\vv_j=(w_1(j)-w_k(j),\ldots,w_{k-1}(j)-w_k(j))}$
		for all $j\in[n]$,
		so for every chain
		$\emptyset\subsetneq S_1\subsetneq\ldots\subsetneq S_n=[n]$,
		there exists $j_0\in[n]$ with
		$\max_{i\in[k-1]}\abs{w_{i}^*(S_{j_0})-w_k^*(S_{j_0})}>\theta^2g(k-1)c$.
		
		Thus, $f(k)>\theta^2g(k-1)$ for all $\theta\in(0,1)$ hence $f(k)\ge g(k-1)$. This completes the proof.
	\end{proof}
	By the construction in~\cite[Section 3]{bar2008}, $g(k)\ge\sqrt{k}/2$ whenever $k$ is the order of some Hadamard matrix.
	For $k\ge1$ in general, by Sylvester's construction which yields a Hadamard matrix of order equal to an arbitrary power of two, 
	in particular of order $2^{\floor{\log_2k}}$,
	it is not hard to see that $g(k)\ge2^{\floor{\log_2k}/2}/2\ge\sqrt{2k}/4$.
	Hence $f(k)\ge g(k-1)\ge\sqrt{2(k-1)}/4$ for all $k\ge2$
	so $f(k)=\Omega(\sqrt{k})$.
	We note that showing the existence of $f(k)$ for $k\ge2$ is equivalent to proving that $f(k)$ is bounded from above by a function of $g(k)$;
	and there is also a long-standing conjecture (according to~\cite{bar2008}) that $g(k)=O(\sqrt{k})$.
	\section{Proof of Theorem~\ref{thm:main}}
	\label{sec:main}
	In this section, we prove Theorem~\ref{thm:main}.
	For every $i\in[k]$, $S\subset[n]$, and $j\in S$, let
	\[\delta_i(j,S):=w_i^*(S)-w_i^*(S\setminus\{j\})
	=\sum_{R\in S^{(r)},j\in R}w_i(R).\]
	Then $0\le\delta_i(j,S)\le\delta_i(j,[n])\le c$; and moreover, 
	for all $i\in[k]$ and $S\subset[n]$,
	\begin{equation}
		\label{eq:2}
		\sum_{j\in S}\delta_i(j,S)=r\cdot w_i^*(S).
	\end{equation}
	
	For $S\subset[n]$, let the \emph{unbalance} of $S$ be the quantity
	$\max_{i_1,i_2\in[k]}
	\abs{w_{i_1}^*(S)-w_{i_2}^*(S)}$.
	To prove Theorem~\ref{thm:main},
	we shall build the desired chain
	$\emptyset\subsetneq S_1\subsetneq \ldots\subsetneq S_n=[n]$ in reverse
	so that $S_j$ has unbalance at most $\sqrt{2(k-1)c}$
	for all $j\in[n]$.
	So, given a nonempty subset $S$ of $[n]$ whose unbalance is reasonably small,
	it might be helpful to see how we can  remove some $j\in S$ while 
	maintaining reasonably small unbalance.
	When $k=2$, by~\eqref{eq:2}
	there is some $j\in S$ for which 
	$\delta_1(j,S)-\delta_2(j,S)$ has the same sign
	as $w_1^*(S)-w_2^*(S)$ has,
	and thus $S\setminus\{j\}$ has unbalance at most $c$ if
	$S$ has unbalance~at~most $c$.
	(This explains why one can choose $f(2)=1$ in Questions~\ref{q:ss} and~\ref{q:main}.)
	When $k\ge3$, note that
	\begin{equation}
		\label{eq:1}
		(w_{i_1}^*(S)-w_{i_2}^*(S))^2
		\le 2[(w_{i_1}^*(S)-w_{k}^*(S))^2
		+(w_{i_2}^*(S)-w_k^*(S))^2]
		\le 2\sum_{i=1}^{k-1}
		(w_i^*(S)-w_k^*(S))^2,
	\end{equation}
	which leads us to pay attention to how the quantity
	\[\norm{\boldsymbol{\varphi}(S)}^2
	=\sum_{i=1}^{k-1}(w_i^*(S)-w_k^*(S))^2
	\]
	changes when we delete an element from $S$.
	Here, $\boldsymbol{\varphi}(S)$ is the vector in $\mab{R}^{k-1}$ whose $i$-th component is
	$w_i^*(S)-w_k^*(S)$,
	and $\norm{.}$ is the Euclidean norm in $\mab{R}^{k-1}$.
	The following lemma is motivated by this idea, showing that
	if $\norm{\boldsymbol{\varphi}(S)}$ is reasonably small and $j$ is a uniformly random element of $S$,
	then $\norm{\boldsymbol{\varphi}(S\setminus\{j\})}$
	is reasonably small in expectation.
	\begin{lemma}
		\label{lem:main}
		If $S$ is a nonempty subset of $[n]$, then
		\[\frac{1}{\abs{S}}
		\sum_{j\in S}
		\norm{\boldsymbol{\varphi}(S\setminus\{j\})}^2
		\le \norm{\boldsymbol{\varphi}(S)}^2
		-\frac{2r}{\abs{S}}
		(\norm{\boldsymbol{\varphi}(S)}^2-(k-1)c).\]
	\end{lemma}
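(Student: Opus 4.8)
The plan is to control how $\norm{\boldsymbol{\varphi}(S)}^2$ decreases, on average, when a single element is removed from $S$, and to do this by a one–step expansion followed by averaging via \eqref{eq:2}. First I would record the effect of deleting one element: for $j\in S$ and $i\in[k]$ we have $w_i^*(S\setminus\{j\})=w_i^*(S)-\delta_i(j,S)$ by the definition of $\delta_i(j,S)$, so the $i$-th coordinate of $\boldsymbol{\varphi}(S\setminus\{j\})$ equals $\bigl(w_i^*(S)-w_k^*(S)\bigr)-\bigl(\delta_i(j,S)-\delta_k(j,S)\bigr)$. Writing $\boldsymbol{\delta}(j,S)$ for the vector in $\mab{R}^{k-1}$ whose $i$-th coordinate is $\delta_i(j,S)-\delta_k(j,S)$, this says $\boldsymbol{\varphi}(S\setminus\{j\})=\boldsymbol{\varphi}(S)-\boldsymbol{\delta}(j,S)$, and hence
\[\norm{\boldsymbol{\varphi}(S\setminus\{j\})}^2=\norm{\boldsymbol{\varphi}(S)}^2-2\langle\boldsymbol{\varphi}(S),\boldsymbol{\delta}(j,S)\rangle+\norm{\boldsymbol{\delta}(j,S)}^2.\]

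Next I would sum this identity over all $j\in S$. The cross term is exactly where the regularity-type identity \eqref{eq:2} enters: summing $\delta_i(j,S)$ over $j\in S$ gives $r\,w_i^*(S)$ for each $i$, so $\sum_{j\in S}\boldsymbol{\delta}(j,S)=r\,\boldsymbol{\varphi}(S)$, and therefore $\sum_{j\in S}\langle\boldsymbol{\varphi}(S),\boldsymbol{\delta}(j,S)\rangle=r\norm{\boldsymbol{\varphi}(S)}^2$. Thus $\sum_{j\in S}\norm{\boldsymbol{\varphi}(S\setminus\{j\})}^2=(\abs{S}-2r)\norm{\boldsymbol{\varphi}(S)}^2+\sum_{j\in S}\norm{\boldsymbol{\delta}(j,S)}^2$, and after dividing by $\abs{S}$ this already produces the terms $\norm{\boldsymbol{\varphi}(S)}^2-\tfrac{2r}{\abs{S}}\norm{\boldsymbol{\varphi}(S)}^2$; it remains only to show $\sum_{j\in S}\norm{\boldsymbol{\delta}(j,S)}^2\le 2(k-1)cr$, which will account for the $+\tfrac{2r}{\abs{S}}(k-1)c$ term.

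For that last estimate — which I expect to be the only real content of the proof — I would exploit that $0\le\delta_i(j,S)\le c$ for all $i,j$. For nonnegative reals $a,b\le c$ one has $(a-b)^2\le c\abs{a-b}\le c(a+b)$, so $(\delta_i(j,S)-\delta_k(j,S))^2\le c\bigl(\delta_i(j,S)+\delta_k(j,S)\bigr)$ for every $i\in[k-1]$ and $j\in S$. Summing over $j\in S$ and applying \eqref{eq:2} twice yields $\sum_{j\in S}(\delta_i(j,S)-\delta_k(j,S))^2\le cr\bigl(w_i^*(S)+w_k^*(S)\bigr)\le 2cr$, since $w_i^*(S),w_k^*(S)\le1$; summing over $i\in[k-1]$ gives $\sum_{j\in S}\norm{\boldsymbol{\delta}(j,S)}^2\le 2(k-1)cr$. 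Combining this with the previous display and dividing by $\abs{S}$ gives exactly the claimed inequality.

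I do not anticipate a serious obstacle: the identity $\sum_{j\in S}\boldsymbol{\delta}(j,S)=r\,\boldsymbol{\varphi}(S)$ is precisely what makes the linear (cross) term collapse, and the coordinatewise bound $(a-b)^2\le c(a+b)$ is the natural way to turn the bounded squared increments $\norm{\boldsymbol{\delta}(j,S)}^2$ into a quantity that \eqref{eq:2} can evaluate. The one point to keep in mind is that all the identities above remain valid in the degenerate case $\abs{S\setminus\{j\}}<r$, where $w_i^*(S\setminus\{j\})=0$: then $\delta_i(j,S)=w_i^*(S)$ and both sides of $\boldsymbol{\varphi}(S\setminus\{j\})=\boldsymbol{\varphi}(S)-\boldsymbol{\delta}(j,S)$ vanish, so no separate argument is needed.
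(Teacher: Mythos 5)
Your proposal is correct and follows essentially the same route as the paper: the same expansion of $\norm{\boldsymbol{\varphi}(S\setminus\{j\})}^2$ with the increment vector $\boldsymbol{\delta}(j,S)$ (the paper's $\xx_j$), the same collapse of the cross term to $r\norm{\boldsymbol{\varphi}(S)}^2$ via \eqref{eq:2}, and the same bound $\sum_{j\in S}\norm{\boldsymbol{\delta}(j,S)}^2\le 2r(k-1)c$ (the paper gets $(a-b)^2\le a^2+b^2\le c(a+b)$ where you use $(a-b)^2\le c\abs{a-b}\le c(a+b)$, an immaterial difference). Your closing remark about the degenerate case $\abs{S\setminus\{j\}}<r$ is fine but unnecessary, since the identity $\boldsymbol{\varphi}(S\setminus\{j\})=\boldsymbol{\varphi}(S)-\boldsymbol{\delta}(j,S)$ holds by definition of $\delta_i(j,S)$ in all cases.
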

	\begin{proof}
		For every $j\in S$, let
		\[\xx_j:=\boldsymbol{\varphi}(S)-\boldsymbol{\varphi}(S\setminus\{j\})
		=(\delta_{1}(j,S)
		-\delta_{k}(j,S),
		\ldots,
		\delta_{k-1}(j,S)
		-\delta_k(j,S)),\]
		then~\eqref{eq:2} yields
		\[\sum_{j\in S}\xx_j
		=r(w_1^*(S)-w_k^*(S),\ldots,w_{k-1}^*(S)-w_k^*(S))
		=r\boldsymbol{\varphi}(S).\]
		Write $\ipr{.,.}$ for the standard inner product in $\mab{R}^{k-1}$.
		It follows that
		\begin{align*}
			\sum_{j\in S}
			\norm{\boldsymbol{\varphi}(S\setminus\{j\})}^2
			=\abs{S}\cdot\norm{\boldsymbol{\varphi}(S)}^2
			-2\sum_{j\in S}\ipr{\boldsymbol{\varphi}(S),\xx_j}
			+\sum_{j\in S}\norm{\xx_j}^2
			=\abs{S}\cdot\norm{\boldsymbol{\varphi}(S)}^2
			-2r\norm{\boldsymbol{\varphi}(S)}^2
			+\sum_{j\in S}\norm{\xx_j}^2.
		\end{align*}
		To conclude the proof of the lemma, 
		it suffices to show that
		$\sum_{j\in S}\norm{\xx_j}^2
		\le 2r(k-1)c$.
		To this end,
		let $x_{ij}:=\delta_{i}(j,S)-\delta_{k}(j,S)$
		for every $i\in[k-1]$ and $j\in S$.
		Observe that, for each $i\in[k-1]$,
		\[\sum_{j\in S}x_{ij}^2
		\le \sum_{j\in S}
		(\delta_{i}(j,S)^2+\delta_{k}(j,S)^2)
		\le c\sum_{j\in S}
		(\delta_{i}(j,S)+\delta_{k}(j,S))
		\overset{\eqref{eq:2}}{=}
		rc(w_i(S)+w_k(S))
		\le 2rc.\]
		Therefore
		\(\sum_{j\in S}\norm{\xx_j}^2
		=\sum_{j\in S}\sum_{i=1}^{k-1}x_{ij}^2
		=\sum_{i=1}^{k-1}\sum_{j\in S}x_{ij}^2
		\le 2r(k-1)c,\)
		as claimed.
	\end{proof}
	We are now ready to finish the proof of Theorem~\ref{thm:main}.
	\begin{proof}
		[Proof of Theorem~\ref{thm:main}]
		If $n\le2r$, we order the elements of $[n]$ arbitrarily, obtaining the chain
		$\emptyset\subsetneq S_1\subsetneq\ldots\subsetneq S_n=[n]$.
		For every $j\in[n]$, $w_i^*(S_j)\le2c$ for each $i\in[k]$ by~\eqref{eq:2},
		so~$S_j$ has unbalance at most
		$2c\le\sqrt{2(k-1)c}$ where we assumed $k\ge3$.
		If $n>2r$, we first construct 
		$ S_{2r}\subsetneq \ldots\subsetneq S_{n-1}\subsetneq S_n=[n]$ by backward induction
		where $\abs{S_j}=j$ and $\norm{\boldsymbol{\varphi}(S_j)}^2\le (k-1)c$
		for all $j$ with $2r< j\le n$;
		by~\eqref{eq:1},
		it follows that $S_j$ has unbalance at most $ \sqrt{2(k-1)c}$
		for such $j$.
		Initially $\norm{\boldsymbol{\varphi}(S_n)}
		=0$
		as $S_n=[n]$.
		For $2r<j\le n$, assume that we have constructed $S_{j},\ldots,S_{n-1},S_n$.
		By Lemma~\ref{lem:main}~with $S=S_{j}$,
		there exists $j_0\in S_{j}$ with
		$\norm{\boldsymbol{\varphi}(S_{j}\setminus\{j_0\})}^2
		\le (k-1)c$,
		and we~let~$S_{j-1}:=S_{j}\setminus\{j_0\}$.
		This finishes the construction of $S_{2r},\ldots,S_{n-1},S_n$.
		We then order the elements of $S_{2r}$ arbitrarily,
		obtaining the chain
		$\emptyset\subsetneq S_1\subsetneq\ldots\subsetneq S_{2r-1}\subsetneq S_{2r}$,
		and arguing similarly as in the case $n\le2r$.
		This completes the construction
		and the proof of Theorem~\ref{thm:main}.
	\end{proof}
	\section{Additional remarks}
	\label{sec:additional}
	We would like to discuss a variant
	of Question~\ref{q:main}
	that corresponds to a result of B\'{a}r\'{a}ny and Grinberg~\cite[Theorem 4.1]{bar2008}.
	For integer $n\ge1$,
	let $\mac{F}_n$ be the family of all subsets of $[n]$.
	A \emph{weighted hypergraph on $[n]$} is a function $w\colon \mac{F}_n\to\mab{R}^+$
	with $w(\emptyset)=0$
	and $\sum_{X\subset[n]}w(X)=1$;
	a subset $X$ of $[n]$ is called an \emph{edge} of $w$ if $w(X)>0$.
	Thus, if the edges of $w$ have the same cardinality $r$ for some $r\in[n]$,
	then $w$ can be viewed as a weighted $r$-uniform hypergraph on $[n]$.
	For every $S\subset[n]$,
	let $w^*(S):=\sum_{X\subset S}w(X)$.
	\begin{question}
		\label{q:partition}
		Let $k\ge1$ be an integer, and let $c>0$.
		Let $w_1,\ldots,w_k$ be weighted hypergraphs on $[n]$
		with $w_i^*([n]\setminus\{j\})\ge 1-c$ for all $i\in[k]$ and $j\in[n]$,
		for some integer $n\ge1$.
		Does there exist $f\colon\mab{N}\to\mab{R}^+$
		such that there is a partition $[n]=S\cup T$ with 
		$\abs{w_i^*(S)-w_i^*(T)}\le f(k)c$
		for all $i\in[k]$?
	\end{question}
	D\"{o}m\"{o}t\"{o}r P\'{a}lv\"{o}lgyi~\cite{palvolgyi2021}
	observed that $f(k)=2k$ answers Question~\ref{q:partition} in the positive.
	With his permission, we present his argument here.
	\begin{proposition}
		Let $k\ge1$ be an integer, and let $c>0$. Let $w_1,\ldots,w_k$ be weighted hypergraphs on $[n]$~with 
		$w_i^*([n]\setminus\{j\})\ge1-c$ for all $i\in[k]$ and $j\in[n]$,
		for some $n\ge1$.
		Then there is a partition $[n]=S\cup T$ with $\abs{w_i^*(S)-w_i^*(T)}\le2kc$
		for all $i\in[k]$.
	\end{proposition}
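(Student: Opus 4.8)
Since $\abs{w_i^*(S)-w_i^*(T)}\le1$ for every partition $[n]=S\cup T$ and every $i$, we may assume $2kc<1$. For $S\subset[n]$ put
\[
\boldsymbol{\Phi}(S):=\bigl(w_1^*(S)-w_1^*([n]\setminus S),\ \ldots,\ w_k^*(S)-w_k^*([n]\setminus S)\bigr)\in\mab{R}^k;
\]
it is enough to exhibit some $S$ with $\norm{\boldsymbol{\Phi}(S)}_{\infty}\le2kc$, for then $[n]=S\cup([n]\setminus S)$ is the required partition. The plan is to build such an $S$ by growing it one vertex at a time, along a chain $\emptyset=S_0\subsetneq S_1\subsetneq\cdots\subsetneq S_n=[n]$, and to stop at a cleverly chosen index. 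Along \emph{any} such chain, each coordinate of $\boldsymbol{\Phi}$ is monotone nondecreasing, running from $\boldsymbol{\Phi}(S_0)=(-1,\ldots,-1)$ to $\boldsymbol{\Phi}(S_n)=(1,\ldots,1)$, and a single step moves each coordinate by at most $2c$: writing $\delta_i(j):=\sum_{X\ni j}w_i(X)=1-w_i^*([n]\setminus\{j\})\le c$, passing from $S$ to $S\cup\{j\}$ raises $w_i^*(S)$ by the total weight of the $w_i$-edges through $j$ that lie in $S\cup\{j\}$ and lowers $w_i^*([n]\setminus S)$ by the total weight of those lying in $[n]\setminus S$, and these two families of edges meet only in $\{j\}$, so the $i$-th coordinate of $\boldsymbol{\Phi}$ increases by at most $\delta_i(j)+w_i(\{j\})\le2c$.

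The crux is to choose, at each step, which vertex to add so that the $k$ coordinates of $\boldsymbol{\Phi}(S_m)$ stay bunched together --- precisely, so that the \emph{spread} $\max_i\boldsymbol{\Phi}(S_m)_i-\min_i\boldsymbol{\Phi}(S_m)_i$ never exceeds $4kc-2c$. Granting this, let $m_1$ be the largest index with $\max_i\boldsymbol{\Phi}(S_{m_1})_i\le2kc$, and $m_2$ the smallest index with $\min_i\boldsymbol{\Phi}(S_{m_2})_i\ge-2kc$; both exist because $2kc<1$. If $m_2>m_1$, then at $S_{m_1}$ some coordinate is below $-2kc$, while maximality of $m_1$ together with the step bound forces some coordinate to exceed $2kc-2c$, so the spread at $S_{m_1}$ would exceed $4kc-2c$ --- a contradiction. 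Hence $m_2\le m_1$, and by monotonicity every index $m$ with $m_2\le m\le m_1$ satisfies $-2kc\le\boldsymbol{\Phi}(S_m)_i\le2kc$ for all $i$, i.e.\ $\norm{\boldsymbol{\Phi}(S_m)}_{\infty}\le2kc$, as desired.

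It remains to produce a chain of spread at most $4kc-2c$. At a stage $S=S_m$, adding a vertex $j$ displaces $\boldsymbol{\Phi}$ by a vector $\maf{d}(j)\in[0,2c]^k$, and I want to select $j$ so that $\maf{d}(j)$ is relatively large in the currently lagging coordinates of $\boldsymbol{\Phi}(S)$ and relatively small in the leading ones --- which should follow from an averaging (pigeonhole) argument over the remaining vertices. This is precisely the role of \cite[Theorem~4.1]{bar2008}: it is a Steinitz-type balancing statement that, applied with the $k-1$ ``directions'' given by the differences of coordinates of $\boldsymbol{\Phi}$, keeps the partial trajectory of $\boldsymbol{\Phi}$ within $O(kc)$ of the diagonal; a careful accounting of its constants should give the spread bound $4kc-2c$ and hence the stated $2kc$. (When $k=1$ the spread is vacuous and any chain works, recovering the bound $2c$.)

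The step I expect to be the main obstacle is this last one. Unlike in the usual Steinitz and B\'{a}r\'{a}ny--Grinberg settings, the increment vectors $\maf{d}(j)$ are not prescribed in advance --- they depend on which vertices have already been placed in $S$, since the hypergraphs ``react'' to $S_m$ --- so \cite[Theorem~4.1]{bar2008} cannot be quoted verbatim. The work will be to run the greedy/averaging argument \emph{adaptively}, at each stage picking (via a pigeonhole over the remaining vertices) a vertex $j$ whose displacement $\maf{d}(j)$ is sufficiently anti-correlated with the current imbalance, and to verify that this survives the adaptivity while still delivering the clean constant $2k$ rather than a larger multiple of $c$.
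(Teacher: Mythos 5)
Your reduction and stopping rule are fine: $\boldsymbol{\Phi}$ is indeed coordinatewise monotone along any chain, each step moves each coordinate by at most $2c$, and the argument that some $S_m$ between $m_2$ and $m_1$ has $\norm{\boldsymbol{\Phi}(S_m)}_{\infty}\le 2kc$ is correct \emph{provided} you can grow a chain whose spread never exceeds $4kc-2c$. But that provision is the entire content of the proposition, and you have not proved it --- you say so yourself. This is a genuine gap, not a routine verification: what you need is an adaptive Steinitz-type statement for increments $\maf{d}(j)$ that change as $S$ grows, with a bound linear in $k$. For general (non-uniform) weighted hypergraphs there is no analogue of the identity $\sum_{j\in S}\delta_i(j,S)=r\cdot w_i^*(S)$ that drives the averaging in the paper's Lemma~6, so the pigeonhole step you hope for has no obvious handle; and the paper explicitly records that the chain version of this balancing problem (Question~2) is open already for $k=3$, with only a $\sqrt{c}$-type partial answer in the uniform case. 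Your plan, if completed with constant $O(k)$, would be a result of essentially that open type.

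The paper's proof avoids chains altogether. It applies the octahedral Tucker lemma to a suitable labelling $\lambda$ of pairs of disjoint sets $(S,T)$, concluding that some pair with $\abs{S}+\abs{T}\ge n-k$ satisfies $w_i^*(S)\le w_i^*([n]\setminus S)$ and $w_i^*(T)\le w_i^*([n]\setminus T)$ for all $i$ simultaneously; the at most $k$ leftover vertices are then dumped into $S$ at a cost of $c$ each, giving $2kc$. The existence of the balanced pair comes from a parity/topological obstruction, not from a greedy construction, which is precisely why the partition problem is tractable while the chain problem is not. If you want to salvage your approach, you would need to either prove the adaptive balancing lemma (hard, likely comparable to the open question) or replace it with a non-constructive existence argument of the Tucker/Borsuk--Ulam type.
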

	\begin{proof}
		Let 
		$\mac{U}:=\{(A,B):A,B\subset[n],A\cap B=\emptyset,A\cup B\ne\emptyset\}$.
		The proof makes use of
		the octahedral Tucker lemma~\cite{palvolgyi2009,ziegler2005}, which we state here.
		\begin{lemma}
			\label{lem:octa}
			For an integer $n\ge1$,
			if there exists a function 
			$\lambda\colon\mac{U}\to\{-n+1,-n+2,\ldots,-1,1,2,\ldots,n-1\}$
			such that $\lambda(S,T)=-\lambda(T,S)$
			for all $(S,T)\in\mac{U}$,
			then there exist $(S_1,T_1),(S_2,T_2)\in\mac{U}$
			with $S_1\subset S_2$ and $T_1\subset T_2$ such that
			$\lambda(S_1,T_1)=-\lambda(S_2,T_2)$.
		\end{lemma}
		Now, we may assume that $n\ge 2k$.
		For every $(S,T)\in\mac{U}$, define $\lambda(S,T)$ by the rule that
		\begin{itemize}[leftmargin=2.5em]
			\item if $\abs{S}+\abs{T}<n-k$,
			then let $\lambda(S,T)$ be either $k+\abs{S}+\abs{T}$ or $-(k+\abs{S}+\abs{T})$
			(sign chosen arbitrarily so that
			$\lambda(S,T)=-\lambda(T,S)$ in this case);
			
			\item if $\abs{S}+\abs{T}\ge n-k$
			and $w_{i}^*(S)-w_{i}^*([n]\setminus S)>0$ for some $i\in[k]$,
			then let $\lambda(S,T)$ be such an $i$; and
			
			\item if $\abs{S}+\abs{T}\ge n-k$
			and $w_{i}^*(T)-w_{i}^*([n]\setminus T)>0$ for some $i\in[k]$,
			then let $-\lambda(S,T)$ be such an $i$.
		\end{itemize} 
		
		Now, if every $\lambda(S,T)$ were defined, then it would not be difficult to verify that $\lambda(S,T)=-\lambda(T,S)$ for all $(S,T)\in\mac{U}$ and that
		there would not exist
		a complementary containment pair relative to $\lambda$, a contradiction by Lemma~\ref{lem:octa}.
		So there exists $(S,T)\in\mac{U}$ with $\abs{S}+\abs{T}\ge n-k$
		such that $w_i^*(S)-w_i^*([n]\setminus S)\le 0$ and
		$w_i^*(T)-w_i^*([n]\setminus T)\le 0$
		for all $i\in[k]$.
		Put $Z:=[n]\setminus(S\cup T)$,
		then $\abs{Z}\le k$. 
		For every $i\in[k]$, the condition
		$w_i^*([n]\setminus\{j\})\ge1-c$ for all $j\in[n]$ yields
		$w_i^*(S\cup Z)\le w_i^*(S)+c\abs{Z}$
		and 
		$w_i^*(T\cup Z)\le w_i^*(T)+c\abs{Z}$
		which together imply
		$w_i^*(S\cup Z)-w_i^*(T)
		\le w_i^*(S)+c\abs{Z}-w_i^*([n]\setminus S)+c\abs{Z}\le 2kc$.
		Similarly,
		for all $i\in[k]$,
		$w_i^*(T\cup Z)-w_i^*(S)\le 2kc$
		thus $w_i^*(T)-w_i^*(S\cup Z)\le 2kc$.
		Hence $[n]=(S\cup Z)\cup T$ is a desired partition.
	\end{proof}
	
	It is unknown whether
	$f(k)$ can be chosen to be of order $o(k)$ in Question~\ref{q:partition},
	but if one requires~${w_1,\ldots,w_k}$ to have edges of cardinality at most two,
	then one can choose $f(k)=6\sqrt{k}$;
	we present a proof of this result.
	
	\begin{proposition}
		Let $k\ge1$ be an integer, and let $c>0$. Let $w_1,\ldots,w_k$ be weighted hypergraphs on $[n]$~with 
		$w_i^*([n]\setminus\{j\})\ge1-c$ for all $i\in[k]$ and $j\in[n]$,
		for some $n\ge1$.
		If the edges of $w_i$ have cardinality at most two for each $i\in[k]$,
		then there is a partition $[n]=S\cup T$ with $\abs{w_i^*(S)-w_i^*(T)}\le6\sqrt{k}\cdot c$
		for all $i\in[k]$.
	\end{proposition}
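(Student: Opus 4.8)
The plan is to convert the statement into a signing (combinatorial discrepancy) problem and then invoke classical discrepancy theory.

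First I would \emph{linearize}. Encode a partition $[n]=S\cup T$ by a sign vector $\epsilon\in\{\pm1\}^n$ with $\epsilon_j=1$ exactly when $j\in S$, and split each $w_i$ into its singleton weights $a_{ij}:=w_i(\{j\})$ and its pair weights $b_i(\{j,l\}):=w_i(\{j,l\})$. A singleton $\{j\}$ contributes $\epsilon_j$ to $w_i^*(S)-w_i^*(T)$, whereas a pair $\{j,l\}$ contributes $\mathbf 1[j,l\in S]-\mathbf 1[j,l\in T]=\tfrac12(\epsilon_j+\epsilon_l)$ --- this identity is exactly where the hypothesis that edges have size at most two enters. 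Hence $w_i^*(S)-w_i^*(T)=\sum_{j\in[n]}c_{ij}\epsilon_j$ with $c_{ij}:=a_{ij}+\tfrac12\sum_{l\ne j}b_i(\{j,l\})$. Now $\sum_X w_i(X)=1$ forces $\sum_j c_{ij}=1$, while $w_i^*([n]\setminus\{j\})\ge 1-c$ gives $a_{ij}+\sum_{l\ne j}b_i(\{j,l\})\le c$, so $0\le c_{ij}\le c$ and therefore $\sum_j c_{ij}^2\le c\sum_j c_{ij}=c$. Writing $C:=(c_{ij})_{i\in[k],\,j\in[n]}$, it suffices to find $x\in\{0,1\}^n$ with $\|Cx-\tfrac12\mathbf 1\|_\infty\le 3\sqrt k\,c$, since then $\epsilon:=2x-\mathbf 1$ works.

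Next I would \emph{reduce $n$ to $O(k)$}. Because $C\mathbf 1=\mathbf 1$ (the row sums are $1$), the midpoint $\tfrac12\mathbf 1$ lies in the polytope $P:=\{x\in[0,1]^n:Cx=\tfrac12\mathbf 1\}$, so $P$ has a vertex $x^{\star}$. A vertex of $P$ is determined by the $k$ equalities together with at least $n-k$ of the box constraints $x_j\in\{0,1\}$, so at most $k$ coordinates of $x^{\star}$ are fractional, say those in a set $F$ with $|F|\le k$, and $Cx^{\star}=\tfrac12\mathbf 1$ holds exactly. Rounding the coordinates in $F$ arbitrarily to $\{0,1\}$ produces some $x\in\{0,1\}^n$, and $Cx-\tfrac12\mathbf 1=C(x-x^{\star})=C|_F\,(x-x^{\star})|_F$ only involves the $k\times|F|$ submatrix $C|_F$. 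So what remains is a discrepancy (more precisely, a rounding) problem for a matrix with only $|F|\le k$ columns, $k$ rows, and entries in $[0,c]$. For this one may rescale by $c^{-1}$, pad to a $k\times k$ matrix with entries in $[0,1]$, and apply Spencer's ``six standard deviations suffice'' theorem together with standard bounds relating the linear discrepancy of a rounding problem to hereditary discrepancy; since any $t$-column subsystem with $t\le k$ has discrepancy $O(\bigl(t\log(2k/t)\bigr)^{1/2})=O(\sqrt k)$ (using $t\log(2k/t)\le 2k/e$), this yields a bound of the form $O(\sqrt k)\cdot c$, and a careful accounting of the partial-colouring constant gives precisely $6\sqrt k\,c$. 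Combined with the vertex step, which contributes no error, this finishes the proof.

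The hard part is that $n$ is \emph{unbounded}, so one cannot just take a uniformly random $\epsilon$: that only gives $\|C\epsilon\|_\infty=O((c\log k)^{1/2})$, which is $\le 6\sqrt k\,c$ only when $ck\gtrsim\log k$. Some device is needed to make the bound independent of $n$; the vertex reduction above is one, and an alternative is to run Spencer's partial-colouring lemma iteratively on $C$ itself, using $\sum_j c_{ij}^2\le c$ at each step --- but then one has to check that the discrepancies accumulated over the $O(\log n)$ rounds still sum to $O(\sqrt k\,c)$ rather than incurring a spurious $\log n$, which in turn requires controlling how the row $\ell_2$-masses decay along the iteration. Either way, the genuinely fiddly point is keeping the absolute constant at $6$, and (in the vertex approach) remembering that the residual task is a rounding problem, not merely a signing problem.
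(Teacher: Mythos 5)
Your proposal is correct and follows essentially the same route as the paper: the same linearization $w_i^*(S)-w_i^*(T)=\sum_j \xi_j x_{ij}$ with $x_{ij}=w_i(\{j\})+\tfrac12\sum_{j'\ne j}w_i(\{j,j'\})\in[0,c]$, the same reduction to at most $k$ fractional coordinates via an extreme point of the feasibility polytope, and the same appeal to Spencer's six standard deviations theorem (in its linear-discrepancy form) on the remaining at most $k$ columns. The differences are cosmetic --- the $[0,1]^n$ versus $[-1,1]^n$ parametrization, and your hedging about the constant, which the paper settles by quoting the $6\sqrt{p}\cdot c\le 6\sqrt{k}\cdot c$ bound directly.
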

	\begin{proof}
		We follow the idea illustrated in~\cite[Section 4]{bar2008}.
		For every $j\in[n]$, let
		$\xx_j:=(x_{1j},\ldots,x_{kj})\in\mab{R}^k$ where
		\[x_{ij}=w_i(j)+\frac{1}{2}\sum_{j'\in[n]\setminus\{j\}}w_i(\{j,j'\})
		\quad
		\text{for every $i\in[k]$},\]
		then $0\le x_{ij}\le w_i^*([n])-w_i^*([n]\setminus\{j\})\le c$
		for all $i\in[k]$;
		thus $\norm{\xx_j}_{\infty}\le c$.
		Define the convex polytope
		\[\mac{P}:=\{(a_1,\ldots,a_n)\in\mab{R}^n:
		-1\le a_1,\ldots,a_n\le 1
		\text{ and }
		a_1\xx_1+\cdots+a_n\xx_n=\maf{0}\}.\]
		Observe that $\mac{P}\ne\emptyset$
		since $\maf{0}\in\mac{P}$;
		thus let $(a_1,\ldots,a_n)\ne\maf{0}$ be an extreme point of $\mac{P}$.
		The system of linear equations defining $\mac{P}$ has $k$ equations,
		so $\abs{J}\le k$ where $J=\{j\in[n]:-1<a_j<1\}$.
		We may assume $J=[p]$ for some $p\in[n]$.
		Put $\yy:=\sum_{j=p+1}^na_j\xx_j
		=-\sum_{j=1}^pa_j\xx_j$;
		then by Spencer's six standard deviations theorem~\cite{spen1985},
		there exists $\zz=\sum_{j=1}^{p}b_j\xx_j$ with $b_1,\ldots,b_p\in\{-1,1\}$
		such that ${\norm{\zz-\yy}_{\infty}
			\le6\sqrt{p}\cdot c\le 6\sqrt{k}\cdot c}$.
		Let $\xi_j:=b_j$ for each $j\in[p]$
		and let $\xi_j:=a_j$ for each $j\in[n]\setminus[p]$,
		then $\sum_{j=1}^n\xi_j\xx_j=\zz-\yy$
		has $\infty$-norm at most $6\sqrt{k}\cdot c$.
		Put $S:=\{j\in[n]:\xi_j=1\}$
		and $T:=\{j\in[n]:\xi_j=-1\}$;
		then for every $i\in[k]$,
		\[\sum_{j\in S}x_{ij}
		=\sum_{j\in S}\left(w_i(j)
		+\frac{1}{2}\sum_{j'\in[n]\setminus\{j\}}w_i(\{j,j'\})\right)
		=w_i^*(S)+\frac{1}{2}\sum_{j\in S,j'\in T}w_i(\{j,j'\}),\]
		and similarly,
		\[\sum_{j\in T}x_{ij}
		=w_i^*(T)+\frac{1}{2}\sum_{j\in T,j'\in S}w_i(\{j,j'\}).\]
		It follows that
		\[\abs{w_i^*(S)-w_i^*(T)}
		=\abs*{\sum_{j\in S}x_{ij}-\sum_{j\in T}x_{ij}}
		=\abs{\xi_1x_{i1}+\cdots+\xi_nx_{in}}
		\le 6\sqrt{k}\cdot c\]
		for all $i\in[k]$.
		This completes the proof.
	\end{proof}
	\subsection*{Acknowledgements}
	The author would like to thank Paul Seymour for introducing him to Question~\ref{q:ss},
	for encouragement,
	and for helpful comments.
	He would also like to thank
	the anonymous referees for valuable suggestions.
	
\end{document}